\theoremstyle{plain}
\newtheorem{theorem}{Theorem}[section]
\newtheorem{corollary}[theorem]{Corollary}
\newtheorem{proposition}[theorem]{Proposition}
\theoremstyle{definition}
\theoremstyle{remark}
\newcommand{\ZZ}{\mathbb{Z}}
\numberwithin{equation}{section}
\begin{document}

\title{Irrationality of the sum of a \boldmath$p$-adic series}
\author{S\'{\i}lvia Casacuberta}
\address{Aula Escola Europea, Mare de D\'eu de Lorda 34, 08034 Barcelona}
\email{silvia.casac@gmail.com}
\date{}

\thanks{}
\subjclass[2010]{Primary 11S80}
\keywords{$p$-adic integer, convergent series, irrational number}

\begin{abstract}
We prove that the sum of the series $\sum_{n=0}^{\infty}\, p^{v_p(n!)}$ is a $p$-adic irrational for all primes~$p$, where $v_p(n!)$ denotes the exponent of the highest power of $p$ dividing $n!$.
\end{abstract}

\maketitle

\section{Introduction}
Let $\ZZ_p$ denote the ring of $p$-adic integers.
One of the features of $p$-adic analysis is that a series $\sum_{n=0}^{\infty}\, x_n$ converges in $\ZZ_p$ if and only if $x_n\to 0$ in the $p$-adic metric \cite{Koblitz,Schikhof}. An intriguing special case is the factorial series $\sum_{n=0}^{\infty}\, n!$, whose sum in $\ZZ_p$ is conjectured to be irrational \cite{Dragovich,MS} for all primes~$p$, although this remains an open problem since Schikhof's remark in \cite[p.\,17]{Schikhof}.

The rate of convergence of $n!$ to zero in $\ZZ_p$ is determined by the highest power of $p$ dividing $n!$. According to Legendre's formula \cite{Legendre}, the exponent of this power is equal to
\begin{equation}
\label{legendre}
v_p(n!)=\frac{n-s_p(n)}{p-1}
\end{equation}
where $s_p(n)$ is the sum of the digits in the base $p$ expansion of~$n$.

In this note we prove that the series $\sum_{n=0}^{\infty}\, p^{v_p(n!)}$ converges to an irrational number $\alpha_p$ in $\ZZ_p$ for all~$p$. This is shown by demonstrating that the $p$-adic expansion of $\alpha_p$ is not periodic.

\bigskip

\noindent
{\bf Acknowledgements.} I am thankful to Dr.\ Xavier Taix\'es for his guidance and encouragement.

\section{Periodicity and rationality}
\label{section1}

We rely on the fact that the $p$-adic expansion of a $p$-adic number $\alpha$ is periodic if and only if $\alpha$ rational, that is, $\alpha=a/b$ where $a$ and $b$ are ordinary integers. The proof of this fact is similar to the proof of the analogous assertion concerning real numbers in terms of their decimal expansions \cite[p.\,106]{Neukirch}.

Let us denote by $s_p(n)$ the sum of all the digits in the base $p$ expansion of a positive integer~$n$. The set of numbers $s_p(n)$ where $n$ ranges from $kp$ to $(k+1)p-1$ will be called the \emph{$k$-th package}. Thus the cardinality of each package is equal to~$p$.

\newpage

\begin{proposition}
The $p$-adic valuation of $n!$ is constant in each package.
\end{proposition}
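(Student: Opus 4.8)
The plan is to reduce everything to Legendre's formula \eqref{legendre} together with a single observation about base-$p$ digits. Fix the $k$-th package and write an arbitrary element of it as $n = kp + r$ with $0 \le r \le p-1$. The key step is to record how the digit sum behaves under this parametrization: since $r$ is a single base-$p$ digit, the base-$p$ expansion of $n = kp + r$ is obtained by appending $r$ as the units digit to the expansion of $k$, so that
\[
s_p(kp + r) = s_p(k) + r.
\]

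With this identity in hand, I would substitute directly into \eqref{legendre}. Computing
\[
v_p(n!) = \frac{n - s_p(n)}{p-1} = \frac{(kp+r) - (s_p(k)+r)}{p-1} = \frac{kp - s_p(k)}{p-1},
\]
the variable $r$ cancels, leaving an expression that depends only on $k$. Since $r$ ranges over $0, 1, \dots, p-1$ as $n$ runs through the package while $k$ stays fixed, this shows that $v_p(n!)$ takes the same value throughout, which is exactly the claim. One may rewrite this common value as $k + v_p(k!)$ using $k - s_p(k) = (p-1)\,v_p(k!)$, a form that should be convenient later.

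The only point requiring care --- and the nearest thing to an obstacle --- is the digit-sum identity $s_p(kp+r) = s_p(k) + r$. It must be justified that adding $r < p$ produces no carry into the higher digit positions occupied by $k$, i.e.\ that multiplication by $p$ is exactly a left shift of the base-$p$ representation. This is immediate from the uniqueness of the base-$p$ expansion, but it is the one place where the hypothesis $0 \le r \le p-1$ is genuinely used: for $r \ge p$ the identity would fail and the valuation would cease to be constant across the block.
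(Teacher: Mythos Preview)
Your argument is correct. Both you and the paper invoke Legendre's formula, but the mechanisms differ slightly: the paper's one-line justification is really the observation that as $n$ runs from $kp$ to $(k+1)p-1$ the successive factors $n$ after $kp$ are all coprime to~$p$, so $v_p(n!)=v_p((kp)!)$ throughout the package; your version instead computes $s_p(kp+r)=s_p(k)+r$ and plugs into \eqref{legendre} to watch $r$ cancel. The paper's phrasing is marginally more conceptual (it is essentially $v_p((n{+}1)!)=v_p(n!)+v_p(n{+}1)$), while yours has the advantage of producing the explicit common value $\dfrac{kp-s_p(k)}{p-1}=k+v_p(k!)$, which is indeed handy for the later digit-count arguments.
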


\begin{proof} 
This follows from \eqref{legendre}, since the numbers in each package are consecutive and there is only one multiple of~$p$ at the beginning of the package.
\end{proof}

In other words, $p^{v_p(n!)}$ is identical for all the numbers $n$ in any given package. 
Note also that, when adding the values of $p^{v_p(n!)}$, there is precisely one carry occurring in each package. For instance, when $p=3$, the first partial sums $\sum_{m=0}^n\, p^{v_p(m!)}$ grouped by packages are the following:
\[
1,2,10, \quad 20,100,110, \quad 210,1010,1110, \quad 11110, 21110,101110,\dots
\]

\begin{proposition}
\label{prop2}
If $n$ is the largest number in a package, then the number of digits of the $p$-adic expansion of $\sum_{m=0}^n\, p^{v_p(m!)}$ is equal to $v_p(n!)+2$.
\end{proposition}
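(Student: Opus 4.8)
The plan is to compute the partial sum at the end of each package explicitly and then read off the length of its base-$p$ expansion. Write $n = (k+1)p - 1$ for the largest number in the $k$-th package, and let $v_k$ denote the common value of $v_p(m!)$ as $m$ ranges over that package, which is well-defined by the preceding proposition; in particular $v_k = v_p(n!)$. Since each package contains exactly $p$ integers and $p^{v_p(m!)} = p^{v_k}$ for every $m$ in it, the contribution of the $k$-th package to the running sum is
\[
\sum_{m=kp}^{(k+1)p-1} p^{v_p(m!)} = p \cdot p^{v_k} = p^{v_k+1}.
\]
Summing over all packages up to the $k$-th, I would obtain the closed form
\[
\sum_{m=0}^{n} p^{v_p(m!)} = \sum_{j=0}^{k} p^{\,v_j+1}.
\]

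The second step is to show that the exponents satisfy $v_0+1 < v_1+1 < \cdots < v_k+1$, so that the sum above is a sum of \emph{distinct} powers of $p$ all with coefficient $1$. For this it suffices to check that $v_j < v_{j+1}$. This holds because passing from the end of package $j$ to the start of package $j+1$ crosses the multiple $(j+1)p$ of $p$: evaluating $v_j$ at $m=(j+1)p-1$ and $v_{j+1}$ at $m=(j+1)p$ gives
\[
v_{j+1} = v_p\bigl(((j+1)p)!\bigr) = v_p\bigl(((j+1)p-1)!\bigr) + v_p\bigl((j+1)p\bigr) = v_j + v_p\bigl((j+1)p\bigr) \geq v_j + 1,
\]
since $(j+1)p$ is divisible by $p$. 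Together with $v_0 = v_p((p-1)!) = 0$, this yields the claimed strictly increasing chain.

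Given strict monotonicity of the exponents, the base-$p$ expansion of $\sum_{j=0}^{k} p^{\,v_j+1}$ carries a digit $1$ in each position $v_j+1$ and $0$ in every other position, with no carries (each coefficient is $1 < p$, and the positions are distinct). Its highest nonzero digit therefore occupies position $v_k+1$, so the expansion runs over positions $0, 1, \ldots, v_k+1$ and consequently has $v_k + 2 = v_p(n!) + 2$ digits, which is exactly the assertion.

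I expect the only genuinely delicate point to be the strict monotonicity in the second step: one must verify that between consecutive packages the valuation actually \emph{jumps} rather than merely failing to decrease, since it is precisely this gap that keeps the powers $p^{\,v_j+1}$ from colliding and producing extra carries. This is what makes the single carry noted before the proposition propagate cleanly into one new leading digit per package. Once this is secured, the digit count in the final step is immediate.
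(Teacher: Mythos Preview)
Your proof is correct. The paper's own argument is a one-line induction on the package index~$k$, invoking only the observation that exactly one carry occurs in each package; you instead compute the closed form $\sum_{j=0}^{k} p^{\,v_j+1}$ for the end-of-package partial sum and read the digit count directly from it. The underlying content is the same --- the strict inequality $v_{j+1}\geq v_j+1$ is exactly what makes the paper's single carry land in a fresh position and what makes your exponents pairwise distinct --- but your version isolates and proves that inequality explicitly, and as a by-product yields the entire base-$p$ expansion of each end-of-package partial sum (all digits $0$ or $1$), not just its length. The paper's inductive phrasing is terser; your direct computation is more transparent and gives slightly more information.
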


\begin{proof}
This follows by induction over $k$ using the fact that there is one carry in each package.
\end{proof}

\begin{theorem}
The $p$-adic expansion of the sum of the series $\sum_{n=0}^{\infty}\, p^{v_p(n!)}$ in $\ZZ_p$ is not periodic.
\end{theorem}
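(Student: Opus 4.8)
The plan is to compute the $p$-adic digits of $\alpha_p$ explicitly by regrouping the series package by package, and then to show that the resulting digit sequence cannot be eventually periodic because its gaps are governed by the sequence $v_p(k)$, which is itself not eventually periodic. By the periodicity--rationality equivalence of Section~\ref{section1}, this yields the theorem.

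First I would regroup. Since $p^{v_p(n!)}\to 0$ the series converges, so consecutive terms may be grouped without changing the sum. By the first proposition of this section the $p$ terms of the $k$-th package are all equal to $p^{V_k}$, where $V_k=v_p((kp)!)=\tfrac{kp-s_p(k)}{p-1}$ by Legendre's formula (using $s_p(kp)=s_p(k)$). Their sum is $p\cdot p^{V_k}=p^{W_k}$ with $W_k:=V_k+1$, which is exactly the single carry per package made explicit. Hence $\alpha_p=\sum_{k=0}^{\infty}p^{W_k}$.

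Next I would record the gaps. A short computation with Legendre's formula gives $W_k-W_{k-1}=V_k-V_{k-1}=1+v_p(k)$ for $k\ge 1$, since incrementing $k-1$ to $k$ changes the digit sum by $s_p(k)-s_p(k-1)=1-(p-1)v_p(k)$. In particular the exponents $W_k$ are strictly increasing, so the powers $p^{W_k}$ occupy distinct positions and the sum $\sum_k p^{W_k}$ involves no carrying. Therefore the $i$-th digit of $\alpha_p$ equals $1$ if $i\in\{W_k:k\ge 0\}$ and $0$ otherwise; this is consistent with Proposition~\ref{prop2}, which places the top digit of the $k$-th partial sum at position $W_k=V_k+1$.

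It then remains to show this indicator sequence is not eventually periodic. I would argue that an eventual period $P$ of the digit sequence forces the gap sequence $(W_k-W_{k-1})_{k\ge 1}=(1+v_p(k))_{k\ge 1}$ to be eventually periodic: the pattern of $1$'s repeats with period $P$, and with some fixed number $q\ge 1$ of ones per block, so the spacings between consecutive $1$'s repeat with period $q$. Consequently $(v_p(k))_{k\ge 1}$ would be eventually periodic, and the final step is to rule this out: if $v_p(k+T)=v_p(k)$ for all $k\ge K$, choose $m$ with $p^m>\max(K,T)$; then $v_p(p^m)=m$ while $v_p(p^m+T)=v_p(T)<m$ since $0<T<p^m$, a contradiction. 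The main obstacle is the bookkeeping in the reduction from periodicity of the digit sequence to periodicity of the gaps (making precise that the number of $1$'s per period is constant and that $q$-step shifts advance the positions by exactly $P$); once that is in place, the regrouping identity and the closed form $W_k-W_{k-1}=1+v_p(k)$ make everything else, including the concluding arithmetic contradiction, immediate.
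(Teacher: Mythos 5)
Your argument is correct, and it takes a more explicit route than the paper's. You first derive a closed form for every digit of $\alpha_p$: regrouping by packages gives $\alpha_p=\sum_{k\ge 0}p^{W_k}$ with $W_k=v_p((kp)!)+1$ strictly increasing, so the digit at position $i$ equals $1$ precisely when $i=W_k$ for some $k$; the gap identity $W_k-W_{k-1}=1+v_p(k)$ (a correct consequence of Legendre's formula via $s_p(k)-s_p(k-1)=1-(p-1)v_p(k)$) then converts eventual periodicity of the digit string into eventual periodicity of $(v_p(k))_{k\ge 1}$, which you refute by comparing $v_p(p^m)$ with $v_p(p^m+T)$. The paper instead only tracks the leading digits of the partial sums: at $n=p^r$ the valuation jumps by $r$, so a block of zeros of length growing with $r$ is created and frozen, and arbitrarily long zero runs are incompatible with periodicity. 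The two proofs exploit the same phenomenon---the large gaps occurring at powers of $p$, which in your notation are the gaps $1+v_p(p^{r-1})=r$---but yours buys a complete description of the expansion (all nonzero digits equal $1$ and sit exactly at the positions $v_p((kp)!)+1$), at the cost of the bookkeeping you flag, namely passing from a period $P$ of the indicator sequence of an infinite set to a period $q\ge 1$ of its gap sequence; that step is standard and poses no real difficulty. The paper's version is shorter but leaves implicit the equally routine fact that an eventually periodic expansion with infinitely many nonzero digits cannot contain arbitrarily long blocks of zeros.
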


\begin{proof}
We focus on what happens when $n$ is a power of $p$. Suppose that $n=p^r$ with $r\ge 1$ and compare the number of digits of a partial sum $\sum_{m=0}^{n-1}\, p^{v_p(m!)}$ with that of the next sum $\sum_{m=0}^n\, p^{v_p(m!)}$. Since $n=p^r$, we have $s_p(n)=1$ and $s_p(n-1)=r(p-1)$; hence
\[
v_p((n-1)!)=\frac{p^r-1}{p-1}-r=v_p(n!)-r.
\]

Consequently, the difference between the number of digits of $\sum_{m=0}^{n}\, p^{v_p(m!)}$ and that of the previous partial sum in the case when $n=p^r$ is precisely $r-1$, by Proposition~\ref{prop2}. This implies that the $p$-adic expansion of $\sum_{m=0}^n\, p^{v_p(m!)}$ starts with $1$ followed by a string of $r-2$ zeroes if $n=p^r$ with $r\ge 3$, and this string of zeroes is forever fixed in the subsequent partial sums. Since $r$ keeps increasing by $1$ each time a power of $p$ is encountered, the $p$-adic expansion of $\sum_{n=0}^{\infty}\, p^{v_p(n!)}$ cannot be periodic.
\end{proof}

\begin{corollary}
The sum of the series $\sum_{n=0}^{\infty}\, p^{v_p(n!)}$ is irrational in $\ZZ_p$ for all primes~$p$.
\end{corollary}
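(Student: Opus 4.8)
The plan is to deduce irrationality directly from the Theorem together with the rationality--periodicity equivalence recalled at the beginning of Section~\ref{section1}. Before applying it I would first confirm that the series actually defines an element of $\ZZ_p$, since otherwise the phrase ``$\alpha_p$ is irrational'' has no meaning. By Legendre's formula \eqref{legendre} we have $v_p(n!)=(n-s_p(n))/(p-1)\to\infty$ as $n\to\infty$, so $p^{v_p(n!)}\to 0$ in the $p$-adic metric and the convergence criterion of \cite{Koblitz,Schikhof} guarantees that $\sum_{n=0}^{\infty}p^{v_p(n!)}$ converges to a well-defined $\alpha_p\in\ZZ_p$.

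Next I would invoke the dictionary: a number in $\ZZ_p$ is rational if and only if its $p$-adic expansion is (eventually) periodic, the analogue for $p$-adic expansions of the familiar statement for decimal expansions \cite{Neukirch}. The Theorem establishes that the $p$-adic expansion of $\alpha_p$ is not periodic; contrapositively, $\alpha_p$ is not rational, i.e.\ it is irrational. Since neither the convergence argument nor the Theorem places any restriction on the prime, the conclusion holds uniformly for all primes~$p$, which is exactly the content of the Corollary.

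The one point I would treat with care---and which I expect to be the main (if minor) obstacle---is matching the two notions of periodicity. The dictionary forces rationality only under \emph{eventual} periodicity, that is, periodicity from some digit onward after an arbitrary finite pre-period, whereas the Theorem is phrased for periodicity outright. Reading its proof, however, this gap closes: the argument produces strings of consecutive zeros whose length grows without bound and which sit at arbitrarily high, permanently fixed positions of the expansion. Any eventual period of length $L$ would be overrun by such a block of more than $L$ zeros, forcing the period to be identically zero and hence $\alpha_p$ to be a rational integer; but the same construction simultaneously pins down nonzero digits arbitrarily far out, so no all-zero tail is possible. Thus non-periodicity in the strong sense required by the dictionary does follow, and the Corollary is then immediate.
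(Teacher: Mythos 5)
Your proposal is correct and follows the same route the paper intends: the Corollary is an immediate consequence of the Theorem combined with the equivalence between rationality and periodicity of $p$-adic expansions recalled at the start of Section~\ref{section1}. Your extra care in checking convergence and in closing the gap between periodicity and \emph{eventual} periodicity (via the unboundedly long zero blocks together with the persistent nonzero digits) is a sensible tightening of the same argument, not a different one.
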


\end{document}